\begin{document}

\newenvironment{proof}[1][Proof]{\textbf{#1.} }{\ \rule{0.5em}{0.5em}}

\newtheorem{theorem}{Theorem}[section]
\newtheorem{definition}[theorem]{Definition}
\newtheorem{lemma}[theorem]{Lemma}
\newtheorem{remark}[theorem]{Remark}
\newtheorem{proposition}[theorem]{Proposition}
\newtheorem{corollary}[theorem]{Corollary}
\newtheorem{example}[theorem]{Example}

\numberwithin{equation}{section}
\newcommand{\ep}{\varepsilon}
\newcommand{\R}{{\mathbb  R}}
\newcommand\C{{\mathbb  C}}
\newcommand\Q{{\mathbb Q}}
\newcommand\Z{{\mathbb Z}}
\newcommand{\N}{{\mathbb N}}

\newcommand{\bfi}{\bfseries\itshape}

\newsavebox{\savepar}
\newenvironment{boxit}{\begin{lrbox}{\savepar}
\begin{minipage}[b]{15.5cm}}{\end{minipage}\end{lrbox}
\fbox{\usebox{\savepar}}}

\title{{\bf Asymptotic bp-stabilization of a given closed invariant set}}
\author{R\u{a}zvan M. Tudoran}

\date{}
\maketitle \makeatother

\begin{abstract}
Given a closed invariant set $\mathcal{C}$ of a dynamical system generated by a smooth vector field, $X$, for each $\lambda > 0$, we construct a control vector field, $X_{0}^{\lambda}$, such that the perturbed dynamics generated by the vector field $X+X_{0}^{\lambda}$, globally asymptotically bp-stabilizes the invariant set $\mathcal{C}$, that is, $\mathcal{C}$ attracts every bounded positive orbit of the perturbed dynamical system.
\end{abstract}

\medskip

\textbf{MSC 2010}: 34C45; 34H15; 70H09.

\textbf{Keywords}: invariant manifolds; attracting sets; asymptotic stabilization.

\section{Introduction}
\label{section:one}

Asymptotic stabilization is an important technique specific to dynamical systems theory, with applicability in all sciences dealing with mathematical models described by dynamical systems, see e.g. \cite{astolfi}, \cite{byrnes}, \cite{chaturvedi}, and the references therein. Roughly speaking, this procedure is used in order to stabilize asymptotically a given invariant set of a dynamical system, and is usually achieved by means of a special type of perturbation, classically referred to as \textit{control}. Some of the main invariant sets to which the asymptotic stabilization procedure is addressed are certain types of orbits, such as equilibria and periodic orbits. Regarding some asymptotic stabilization results in this direction, see e.g. \cite{lin}, \cite{bbpt}, \cite{bc}, \cite{tudoranPO}, and the references therein. In the present work we deal with a special type of asymptotic stabilization, namely the so called \textit{global asymptotic bp-stabilization}. More exactly, a closed invariant set of a dynamical system is globally asymptotically bp-stable if it attracts every bounded positive orbit of the system.

The aim of this article is to provide an explicit method in order to globally asymptotically bp-stabilize a given closed invariant set of a dynamical system generated by a smooth vector field. More precisely, given a smooth vector field $X\in\mathfrak{X}(U)$ (defined on an open subset $U\subseteq\mathbb{R}^n$) and a closed invariant set $\mathcal{C}$ (defined as the preimage $\mathbf{D}^{-1}(\{\mathbf{d}\})$ corresponding to a value $\mathbf{d}\in \operatorname{Im}(\mathbf{D})$ of a smooth function $\mathbf{D}\in\mathcal{C}^{\infty}(U,\mathbb{R}^{p})$, where $1\leq p\leq n$), we construct a family of control vector fields $\left\{ X_{0}^{\lambda} \right\} _{\lambda > 0}\subset\mathfrak{X}(\operatorname{Mrk}(\mathbf{D}))$ (where $\operatorname{Mrk}(\mathbf{D})\subseteq U$ is the open set consisting of the maximal rank points of $\mathbf{D}$) such that the perturbed dynamics generated by the vector field $X+X_{0}^{\lambda}$, globally asymptotically bp-stabilizes the closed invariant set $\mathcal{C}$, i.e. $\mathcal{C}$ attracts every bounded positive orbit of the dynamical system $\dot{\mathbf{x}}=X(\mathbf{x})+X_{0}^{\lambda}(\mathbf{x})$, $\mathbf{x}\in\operatorname{Mrk}(\mathbf{D})$.

The structure of this article is the following. In the second section we prepare the settings of the problem by characterizing the set of dynamical systems (generated by smooth vector fields) with a prescribed closed invariant set. The third section contains the main result of this work which gives an explicit method to globally asymptotically bp-stabilize a given closed invariant set of a dynamical system generated by a smooth vector field defined on an open subset of $\mathbb{R}^n$. The last part of the article presents an example which illustrates the theoretical results obtained in this work.

\section{Dynamical systems with a prescribed closed invariant set}

Let us start this section by recalling a classical result which states that a nonempty closed subset $\mathcal{C}\subset \mathbb{R}^n$ (given as the preimage $\Sigma^{\mathbf{D},\mathbf{d}}:=\mathbf{D}^{-1}(\{\mathbf{d}\})$ corresponding to some value $\mathbf{d}:=(d_1,\dots,d_p)\in \operatorname{Im}(\mathbf{D})$ of a smooth function $\mathbf{D}:=(D_1,\dots,D_p):U\subseteq \mathbb{R}^n\rightarrow \mathbb{R}^p$, $1\leq p\leq n$, defined on an open subset $U\subseteq \mathbb{R}^n$) is invariant by the dynamics generated by a smooth vector field $X\in\mathfrak{X}(U)$, if there exist some smooth functions $h_{ij}\in\mathcal{C}^{\infty}(U,\mathbb{R})$, $i,j\in\{1,\dots,p\}$, such that 
\begin{align}\label{eqimp}
\mathcal{L}_{X}D_i = \sum_{j=1}^{p}h_{ij} (D_j - d_j), ~~~ \forall i\in\{1,\dots,p\},
\end{align}
where the notation $\mathcal{L}_{X}$ stands for the Lie derivative along the vector field $X$. Moreover, the above condition is also necessary if the rank of $\mathbf{D}$ is maximal (i.e., it equals to $p$) at every point of $\Sigma^{\mathbf{D},\mathbf{d}}$.

In order to give a geometric description of the set of smooth vector fields $X\in\mathfrak{X}(U)$ satisfying the condition \eqref{eqimp} (and hence keeping invariant $\Sigma^{\mathbf{D},\mathbf{d}}$), we shall need a result from \cite{tudoran} which provides a constructive method to characterize the class of smooth vector fields (defined on a smooth Riemannian manifold) which dissipate a given set of scalar quantities, with a-priori defined dissipation rates.

The following result from \cite{tudoran}, is the key ingredient to obtain the main results of this article.

\begin{theorem}[\cite{tudoran}]\label{MTD}
Let $(M,g)$ be an $n$-dimensional smooth Riemannian manifold, and fix $p\in \mathbb{N}\setminus \{0\}$ a nonzero natural number. Let $h_1, \dots, h_p \in \mathcal{C}^{\infty}(U,\mathbb{R})$ be a given set of smooth functions defined on an open subset $U\subseteq M$, and respectively let $D_1,\dots, D_p\in \mathcal{C}^{\infty}(U,\mathbb{R})$ be given, such that $\{\nabla D_1 , \dots,\nabla D_p\}\subset \mathfrak{X}(U)$ forms a set of pointwise linearly independent vector fields on $U$.

Then the set of solutions $X\in \mathfrak{X}(U)$ of the system
\begin{equation}\label{EQA}
\begin{split}
\left\{\begin{array}{l}
\mathcal{L}_{X}D_1 = h_1\\
\dots\\
\mathcal{L}_{X}D_p = h_p,\\
\end{array}\right.
\end{split}
\end{equation}
forms the affine distribution
$$
\mathfrak{A}[X_0;\nabla D_1 , \dots, \nabla D_p ]:=X_0 + \mathfrak{X}[\nabla D_1, \dots, \nabla D_p],
$$
locally generated by the set of vector fields
$$
\left\{X_0\right\}\biguplus \left\{\star\left( \bigwedge_{i=1, i\neq k}^{n-p} Z_i \wedge \bigwedge_{j=1}^{p} \nabla D_j \right): k\in\{1,\dots, n-p \}\right\}
$$
where
$$
X_{0}:=\left\| \bigwedge_{i=1}^{p} \nabla D_i \right\|_{p}^{-2}\cdot\sum_{i=1}^{p}(-1)^{n-i}h_i \Theta_i, ~~
\Theta_i := \star\left[ \bigwedge_{j=1, j\neq i}^{p} \nabla D_j \wedge \star\left(\bigwedge_{j=1}^{p} \nabla D_j \right)\right],
$$
and respectively the set of locally defined vector fields $\{\nabla D_1, \dots, \nabla D_p, Z_1, \dots, Z_{n-p}\}$ forms a moving frame. The notation ``$\star$" stands for the Hodge star operator on multivector fields, and $\nabla$ stands for the gradient operator associated to the Riemannian metric $g$. 
\end{theorem}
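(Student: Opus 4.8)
The plan is to reduce the global problem on $\mathfrak{X}(U)$ to pointwise linear algebra on each tangent space. Since for any smooth function $D_i$ one has $\mathcal{L}_{X}D_i = X(D_i) = g(\nabla D_i, X)$, the system \eqref{EQA} is equivalent, fiberwise at every $x\in U$, to the inhomogeneous linear system $g(\nabla D_i, X) = h_i$, $i\in\{1,\dots,p\}$, in the unknown $X\in T_xM$. Because $\{\nabla D_1,\dots,\nabla D_p\}$ is pointwise linearly independent, this system is consistent and its solution set is an affine subspace of $T_xM$ of dimension $n-p$: any particular solution plus the kernel $\{Y : g(\nabla D_i,Y)=0,\ \forall i\}$, which is precisely the $g$-orthogonal complement of $\operatorname{span}\{\nabla D_1,\dots,\nabla D_p\}$. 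Hence it suffices to exhibit one solution, namely $X_0$, together with a local frame for this orthogonal complement, namely the listed Hodge-dual vector fields.

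Next I would verify that $X_0$ solves the system by computing $g(\nabla D_k, X_0)$. Writing $\Omega:=\bigwedge_{j=1}^{p}\nabla D_j$ and identifying multivector fields with differential forms through $g$, the only two facts needed are $a\wedge\star b = g(a,b)\,\mathrm{vol}$ and the involutivity relation for $\star$, which together give $g(a,\star b) = (-1)^{n-1}\star(a\wedge b)$ for a $1$-form $a$ and an $(n-1)$-form $b$. Applied to $\Theta_i=\star\big(\bigwedge_{j\neq i}\nabla D_j\wedge\star\Omega\big)$ this yields $g(\nabla D_k,\Theta_i) = (-1)^{n-1}\star\big(\nabla D_k\wedge\bigwedge_{j\neq i}\nabla D_j\wedge\star\Omega\big)$. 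The factor $\nabla D_k\wedge\bigwedge_{j\neq i}\nabla D_j$ vanishes when $k\neq i$ (a repeated gradient) and equals $(-1)^{i-1}\Omega$ when $k=i$; combined with $\Omega\wedge\star\Omega=\|\Omega\|_p^2\,\mathrm{vol}$ this gives $g(\nabla D_k,\Theta_i)=(-1)^{n-i}\delta_{ki}\|\Omega\|_p^2$. Substituting into the definition of $X_0=\|\Omega\|_p^{-2}\sum_i(-1)^{n-i}h_i\Theta_i$ collapses the sum to $g(\nabla D_k,X_0)=h_k$, since $(-1)^{n-k}(-1)^{n-k}=1$.

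It then remains to identify the homogeneous part. Setting $V_k:=\star\big(\bigwedge_{i\neq k}Z_i\wedge\Omega\big)$ for $k\in\{1,\dots,n-p\}$, orthogonality to every $\nabla D_m$ is immediate from the same identity: $g(\nabla D_m,V_k)=(-1)^{n-1}\star\big(\nabla D_m\wedge\bigwedge_{i\neq k}Z_i\wedge\Omega\big)=0$, because $\nabla D_m$ already occurs inside $\Omega$, so the wedge has a repeated factor. Linear independence follows because $\{\nabla D_1,\dots,\nabla D_p,Z_1,\dots,Z_{n-p}\}$ is a moving frame: the $(n-1)$-multivectors $\bigwedge_{i\neq k}Z_i\wedge\Omega$ are, up to sign, the $n-p$ distinct frame $(n-1)$-vectors obtained by deleting $Z_k$, hence independent, and $\star$ is a fiberwise linear isomorphism. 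As there are exactly $n-p$ of them and the orthogonal complement has dimension $n-p$, they generate it, so the solution set coincides with $X_0+\mathfrak{X}[\nabla D_1,\dots,\nabla D_p]=\mathfrak{A}[X_0;\nabla D_1,\dots,\nabla D_p]$.

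The main obstacle is confined to the second step: carrying the Hodge-star sign and normalization bookkeeping through carefully enough that the factor $\|\Omega\|_p^{-2}$ and the alternating signs $(-1)^{n-i}$ conspire to reproduce exactly $h_k$, rather than $h_k$ up to an overall sign or scalar. Once the key identity $g(\nabla D_k,\Theta_i)=(-1)^{n-i}\delta_{ki}\|\Omega\|_p^2$ is pinned down, the remaining verifications are formal, and the fiberwise affine description globalizes to $\mathfrak{X}(U)$ by smoothness of all ingredients.
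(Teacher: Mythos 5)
The paper does not actually prove this theorem: it is imported verbatim from the reference \cite{tudoran} and used as a black box, so there is no internal proof to compare yours against. Judged on its own, your argument is correct and is the natural one. The reduction to fiberwise linear algebra is right: since $\mathcal{L}_XD_i=g(\nabla D_i,X)$ and the gradients are pointwise independent, the solution set at each point is a particular solution plus the $(n-p)$-dimensional $g$-orthogonal complement of $\operatorname{span}\{\nabla D_1,\dots,\nabla D_p\}$ (which is what the notation $\mathfrak{X}[\nabla D_1,\dots,\nabla D_p]$ must denote, given the listed generators). Your sign bookkeeping also checks out: with $\star\star=(-1)^{n-1}\mathrm{id}$ on degree $n-1$ one gets $g(a,\star b)=(-1)^{n-1}\star(a\wedge b)$, the reordering $\nabla D_i\wedge\bigwedge_{j\neq i}\nabla D_j=(-1)^{i-1}\Omega$ contributes $(-1)^{i-1}$, and $(-1)^{n-1}(-1)^{i-1}=(-1)^{n-i}$, so indeed $g(\nabla D_k,\Theta_i)=(-1)^{n-i}\delta_{ki}\|\Omega\|_p^2$ and the two factors of $(-1)^{n-k}$ cancel to give $g(\nabla D_k,X_0)=h_k$. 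The identification of the homogeneous part (orthogonality of the $V_k$ because $\nabla D_m$ recurs inside $\Omega$, independence because $\star$ is a fiberwise isometry applied to distinct frame $(n-1)$-vectors, and a dimension count) is likewise complete; the only point worth stating explicitly is that the coefficients expressing an arbitrary smooth solution of the homogeneous system in the local frame $\{V_k\}$ are smooth, which follows from Cramer's rule, so the fiberwise description does globalize as you assert.
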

In contrast with the vector fields $\nabla D_1 , \dots,\nabla D_p$, which are globally defined on $U$, the vector fields $Z_1, \dots, Z_{n-p}$ exist in general only locally, around each point $x\in U$, in some open neighborhood $U_x \subseteq U$. \textbf{Nevertheless, the vector field $X_0$ is a globally defined solution of \eqref{EQA}.} 

Returning to the problem regarding the set of vector fields which verifies the condition \eqref{eqimp} (and hence keep invariant $\Sigma^{\mathbf{D},\mathbf{d}}$), we obtain the following result which is a direct application of Theorem \ref{MTD}. Before stating the result, let us denote by $\operatorname{Mrk}(\mathbf{D})\subseteq U$, the open subset of $U$ consisting of the maximal rank points of the smooth function $\mathbf{D}=(D_1,\dots,D_p):U\subseteq \mathbb{R}^n\rightarrow \mathbb{R}^p$, i.e. the points $\mathbf{x}\in U$ such that the vectors $\nabla D_1 (\mathbf{x}), \dots, \nabla D_p (\mathbf{x})$ are linearly independent. Recall that $\operatorname{Mrk}(\mathbf{D})$ is an open subset of $U$ contained in the set of regular points of $D$, where a point $\mathbf{x}_0 \in U$ is a regular point of $\mathbf{D}$ if there exists an open neighborhood $U_{\mathbf{x}_0}\subseteq U$ such that $ \operatorname{rank}(\mathrm{d}\mathbf{D} (\mathbf{x}))=\operatorname{rank}(\mathrm{d}\mathbf{D} (\mathbf{x}_0))$, for all $\mathbf{x}\in U_{\mathbf{x}_0}$. Recall also that the set of regular points of $\mathbf{D}$ is an open dense subset of $U$ in contrast with $\operatorname{Mrk}(\mathbf{D})$ which is open but not necessarily dense. The rank of $\mathrm{d}\mathbf{D} (\cdot)$ is constant on each connected component of the set of regular points of $\mathbf{D}$.

\begin{theorem}
Let $\mathcal{C}\subset \mathbb{R}^n$ be a nonempty closed subset given as the preimage $\Sigma^{\mathbf{D},\mathbf{d}}:=\mathbf{D}^{-1}(\{\mathbf{d}\})$ corresponding to some value $\mathbf{d}:=(d_1,\dots,d_p)\in \operatorname{Im}(\mathbf{D})$ of a smooth function $\mathbf{D}:=(D_1,\dots,D_p):U\subseteq \mathbb{R}^n\rightarrow \mathbb{R}^p$, $1\leq p\leq n$, defined on an open subset $U\subseteq \mathbb{R}^n$. Let $h_{ij}\in\mathcal{C}^{\infty}(U,\mathbb{R})$, $i,j\in\{1,\dots,p\}$, be a given set of smooth functions. 

If one denotes $h_i :=\sum_{j=1}^{p}h_{ij} (D_j - d_j), i\in\{1,\dots,p\}$, then the set of solutions $X\in \mathfrak{X}(\operatorname{Mrk}(\mathbf{D}))$ of the system
\begin{equation}\label{EQAA}
\begin{split}
\left\{\begin{array}{l}
\mathcal{L}_{X}D_1 = h_1\\
\dots\\
\mathcal{L}_{X}D_p = h_p,\\
\end{array}\right.
\end{split}
\end{equation}
forms the affine distribution
$$
\mathfrak{A}[X_0;\nabla D_1 , \dots, \nabla D_p ] =X_0 + \mathfrak{X}[\nabla D_1, \dots, \nabla D_p],
$$
locally generated by the set of vector fields
$$
\left\{X_0\right\}\biguplus \left\{\star\left( \bigwedge_{i=1, i\neq k}^{n-p} Z_i \wedge \bigwedge_{j=1}^{p} \nabla D_j \right): k\in\{1,\dots, n-p \}\right\}
$$
where
$$
X_{0} =\left\| \bigwedge_{i=1}^{p} \nabla D_i \right\|_{p}^{-2}\cdot\sum_{i=1}^{p}(-1)^{n-i}h_i \Theta_i, ~~~~~
\Theta_i  = \star\left[ \bigwedge_{j=1, j\neq i}^{p} \nabla D_j \wedge \star\left(\bigwedge_{j=1}^{p} \nabla D_j \right)\right],
$$
and respectively the set of locally defined vector fields $$\{\nabla D_1, \dots, \nabla D_p, Z_1, \dots, Z_{n-p}\}$$ forms a moving frame.
\end{theorem}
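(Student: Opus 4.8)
The plan is to obtain the statement as a direct specialization of Theorem \ref{MTD} to the Euclidean setting, restricted to the open set $\operatorname{Mrk}(\mathbf{D})$. First I would endow $\mathbb{R}^n$ with its standard Euclidean metric $g=\langle\cdot,\cdot\rangle$, so that $(\mathbb{R}^n,g)$ becomes an $n$-dimensional smooth Riemannian manifold; with this choice the gradient operator $\nabla$ and the Hodge star $\star$ appearing in the statement coincide with the usual Euclidean gradient and the standard Hodge star on multivector fields in $\mathbb{R}^n$, while $\|\cdot\|_p$ is the induced norm on $p$-vectors. The role of the open subset denoted $U$ in the hypotheses of Theorem \ref{MTD} is played here by $\operatorname{Mrk}(\mathbf{D})$.

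The key step is to verify the hypothesis of Theorem \ref{MTD}, namely that $\{\nabla D_1,\dots,\nabla D_p\}$ is a set of pointwise linearly independent vector fields, on $\operatorname{Mrk}(\mathbf{D})$. This is precisely where the restriction from $U$ to $\operatorname{Mrk}(\mathbf{D})$ is essential: by the very definition of the set of maximal rank points, for every $\mathbf{x}\in\operatorname{Mrk}(\mathbf{D})$ the differentials $\mathrm{d}D_1(\mathbf{x}),\dots,\mathrm{d}D_p(\mathbf{x})$ are linearly independent, equivalently $\operatorname{rank}(\mathrm{d}\mathbf{D}(\mathbf{x}))=p$. Since under the Euclidean metric each $\nabla D_i$ is the metric dual of $\mathrm{d}D_i$, pointwise linear independence of the gradients is equivalent to pointwise linear independence of the differentials. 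Hence the hypothesis holds on all of $\operatorname{Mrk}(\mathbf{D})$, which is an open set by construction, so Theorem \ref{MTD} is applicable there.

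With the hypotheses in place, I would simply invoke Theorem \ref{MTD} for the smooth functions $h_1,\dots,h_p\in\mathcal{C}^{\infty}(\operatorname{Mrk}(\mathbf{D}),\mathbb{R})$ given by $h_i:=\sum_{j=1}^{p}h_{ij}(D_j-d_j)$. The theorem then yields verbatim that the set of solutions $X\in\mathfrak{X}(\operatorname{Mrk}(\mathbf{D}))$ of the system \eqref{EQAA} is exactly the affine distribution $\mathfrak{A}[X_0;\nabla D_1,\dots,\nabla D_p]=X_0+\mathfrak{X}[\nabla D_1,\dots,\nabla D_p]$, locally generated by the stated family of vector fields, with $X_0$ and $\Theta_i$ given by the displayed formulas and $\{\nabla D_1,\dots,\nabla D_p,Z_1,\dots,Z_{n-p}\}$ forming a moving frame. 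Following the remark after Theorem \ref{MTD}, I would also stress that $X_0$ is a globally defined solution on $\operatorname{Mrk}(\mathbf{D})$, whereas the complementary frame fields $Z_1,\dots,Z_{n-p}$, and hence the remaining generators, exist in general only locally around each point.

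I do not expect a genuine analytic obstacle here; the entire content lies in correctly matching the ambient data to the hypotheses of Theorem \ref{MTD}. The only point requiring care is that $\operatorname{Mrk}(\mathbf{D})$, although open, need not be dense in $U$, so the characterization is asserted only over $\operatorname{Mrk}(\mathbf{D})$ rather than over all of $U$. It is worth noting that the particular algebraic form $h_i=\sum_{j=1}^{p}h_{ij}(D_j-d_j)$ plays no role in the characterization itself, since any smooth choice of $h_i$ would give the analogous conclusion; this specific form is exactly the choice that, via condition \eqref{eqimp}, forces every resulting vector field to keep $\Sigma^{\mathbf{D},\mathbf{d}}$ invariant, which is the feature relevant to the stabilization results that follow.
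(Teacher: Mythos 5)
Your proposal is correct and follows exactly the paper's route: the paper's entire proof is the one-line observation that the result follows directly from Theorem \ref{MTD}, and you have simply spelled out the (routine) verification that restricting to $\operatorname{Mrk}(\mathbf{D})$ guarantees the pointwise linear independence hypothesis of that theorem. No discrepancy to report.
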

\begin{proof}
The proof follows directly from the Theorem \ref{MTD}.
\end{proof}

Note that each vector field from the affine distribution $\mathfrak{A}[X_0;\nabla D_1 , \dots, \nabla D_p ]$ keeps dynamically invariant the set $\Sigma^{\mathbf{D},\mathbf{d}}\cap \operatorname{Mrk}(\mathbf{D})$. Moreover, in the case when $\mathbf{d}$ is a regular value of the smooth function $\mathbf{D}=(D_1,\dots,D_p)$, then $\Sigma^{\mathbf{D},\mathbf{d}}\subset \operatorname{Mrk}(\mathbf{D})$, and hence in this case each vector field from the affine distribution $\mathfrak{A}[X_0;\nabla D_1 , \dots, \nabla D_p ]$ keeps dynamically invariant the set $\Sigma^{\mathbf{D},\mathbf{d}}$; in this case, $\Sigma^{\mathbf{D},\mathbf{d}}$ is a smooth submanifold of $\mathbb{R}^n$ of dimension $n-p$. Recall that by Sard's theorem, almost all points in the image of the smooth function $\mathbf{D}$ are regular values, i.e. the set of singular values of $\mathbf{D}$ is a set of Lebesgue measure zero in $\mathbb{R}^p$.

\section{Global asymptotic bp-stabilization of a given closed invariant set}

This section contains the main result of this work which gives an explicit method to globally asymptotically bp-stabilize a given closed invariant set of a dynamical system generated by a smooth vector field defined on an open subset of $\mathbb{R}^n$.

Before stating the main result of this section, we recall some terminology we shall need in the sequel. In order to do that, we consider a smooth vector field $X\in\mathfrak{X}(U)$ defined on an open set $U\subseteq\mathbb{R}^n$. Then for each $\overline{\mathbf{x}}\in U$ we denote by $t\in I_{\overline{\mathbf{x}}}\subseteq \mathbb{R}\mapsto \mathbf{x}(t;\overline{\mathbf{x}})\in U$, the solution of the Cauchy problem ${\mathrm{d}\mathbf{x}}/{\mathrm{d}t}=X(\mathbf{x}(t))$, $\mathbf{x}(0)=\overline{\mathbf{x}}$, defined on the maximal domain $I_{\overline{\mathbf{x}}}\subseteq \mathbb{R}$, where $I_{\overline{\mathbf{x}}}$ is an open interval of $\mathbb{R}$ containing the origin. If the \textit{positive orbit} of $\overline{\mathbf{x}}\in U$ (i.e. the set $\{\mathbf{x}(t;\overline{\mathbf{x}}):t\geq 0\}$) is contained in a compact subset of $U$, then the solution $x(t;\overline{\mathbf{x}})$ is defined for all $t\in[0,\infty)$. Denoting by $\omega(\overline{\mathbf{x}}):=\{\mathbf{y}\in U: \exists (t_{n})_{n\in\mathbb{N}}\subset [0,\infty),~t_{n} < t_{n+1},~{t_n}\rightarrow {\infty}~\text{s.t.}~\lim_{n\rightarrow \infty}\mathbf{x}(t_n;\overline{\mathbf{x}})=\mathbf{y}\}$, the $\omega-$\textit{limit set} of $\overline{\mathbf{x}}$, we get that for each $\overline{\mathbf{x}}\in U$ with bounded positive orbit, $\omega(\overline{\mathbf{x}})$ is a nonempty, invariant, compact and connected subset of $U$. Moreover, $\mathbf{x}(t;\overline{\mathbf{x}})\rightarrow\omega(\overline{\mathbf{x}})$ as $t\rightarrow \infty$, i.e. for every $\varepsilon>0$ there exists $T>0$ such that $\operatorname{dist}(\mathbf{x}(t;\overline{\mathbf{x}}),\omega(\overline{\mathbf{x}}))<\varepsilon$, for all $t>T$. For details see e.g., \cite{hartman}, \cite{verhulst}. 

\begin{definition}[\cite{tudoranDef}]
Let $X\in\mathfrak{X}(U)$ be a smooth vector field defined on an open set $U\subseteq\mathbb{R}^n$. A closed and invariant subset $\mathcal{A}\subset U$ is called \textbf{globally  bp-attracting set} of the dynamical system generated by $X$, if for \textbf{every} point $\overline{\mathbf{x}}\in U$ such that the set $\{\mathbf{x}(t;\overline{\mathbf{x}}): t\geq 0\}$ is bounded, the integral curve of $X$ starting from $\overline{\mathbf{x}}$ approaches $\mathcal{A}$ as $t\rightarrow \infty$.
\end{definition}
Recall from \cite{tudoranDef} that in contrast to global attractors, a globally bp-attracting set is not required to be compact, and also needs not be connected, its connectivity being related to the connectivity of the open set $U$. 

Let us state now the main result of this article.
\begin{theorem}\label{mainTHM}
Let $\dot {\mathbf{x}} = X(\mathbf{x}), ~ \mathbf{x}\in U$, be a dynamical system generated by a smooth vector field, $X\in\mathfrak{X}(U)$, defined on an open set $U\subseteq\mathbb{R}^{n}$. Assume there exists a set of smooth functions, $h_{ij}\in\mathcal{C}^{\infty}(U,\mathbb{R})$, $i,j\in\{1,\dots,p\}$, such that
\begin{equation}\label{EQAU}
\begin{split}
\left\{\begin{array}{l}
\mathcal{L}_{X}D_1 = h_1\\
\dots\\
\mathcal{L}_{X}D_p = h_p,\\
\end{array}\right.
\end{split}
\end{equation}
where $\mathbf{D}:=(D_1,\dots,D_p):U\subseteq \mathbb{R}^n\rightarrow \mathbb{R}^p$, $1\leq p\leq n$, is a smooth function, $\mathbf{d}:=(d_1,\dots,d_p)\in \operatorname{Im}(\mathbf{D})$, is a fixed point in the image of $\mathbf{D}$, and $h_i :=\sum_{j=1}^{p}h_{ij} (D_j - d_j), i\in\{1,\dots,p\}$.

Then, for each strictly positive real number, $\lambda >0$, there exists a smooth vector field, $X_{0}^{\lambda}\in\mathfrak{X}(\operatorname{Mrk}(\mathbf{D}))$, given by 
$$
X_{0}^{\lambda}:=\left\| \bigwedge_{i=1}^{p} \nabla D_i \right\|_{p}^{-2}\cdot\sum_{i=1}^{p}(-1)^{n-i+1}[h_i +\lambda(D_i - d_i)]\Theta_i, ~~~~~
\Theta_i := \star\left[ \bigwedge_{j=1, j\neq i}^{p} \nabla D_j \wedge \star\left(\bigwedge_{j=1}^{p} \nabla D_j \right)\right],
$$
such that the perturbed dynamical system,
\begin{equation}\label{sp}
\dot {\mathbf{x}} = X(\mathbf{x}) + X_{0}^{\lambda}(\mathbf{x}), ~ \mathbf{x}\in\operatorname{Mrk}(\mathbf{D}),
\end{equation}
satisfies the following statements. 

\begin{itemize}
\item [(a)] The set $\Sigma^{\mathbf{D},\mathbf{d}}:=\mathbf{D}^{-1}(\{\mathbf{d}\})$ is a dynamically invariant set of both, the dynamical system $\dot {\mathbf{x}} = X(\mathbf{x}), ~ \mathbf{x}\in U$, and the perturbed system $\dot {\mathbf{x}} = X(\mathbf{x})+ X_{0}^{\lambda}(\mathbf{x}), ~ \mathbf{x}\in \operatorname{Mrk}(\mathbf{D})$.
\item [(b)] The set $\Sigma^{\mathbf{D},\mathbf{d}}\cap \operatorname{Mrk}(\mathbf{D})$ is a globally bp-attracting set of the perturbed system \eqref{sp}. More precisely, for every $\overline{\mathbf{x}} \in\operatorname{Mrk}(\mathbf{D})$, such that the set $\{\mathbf{x}(t;\overline{\mathbf{x}}): t\geq 0\}$ is bounded, $\mathbf{x}(t;\overline{\mathbf{x}})\rightarrow \Sigma^{\mathbf{D},\mathbf{d}}\cap \operatorname{Mrk}(\mathbf{D})$ as $t\rightarrow \infty$.
\item [(c)] Assume there exists $K\subset \operatorname{Mrk}(\mathbf{D})$ a positively invariant compact set of the system \eqref{sp}. Then for every $\overline{\mathbf{x}}\in K$ we have that $\mathbf{x}(t;\overline{\mathbf{x}})\rightarrow \Sigma^{\mathbf{D},\mathbf{d}}\cap \operatorname{Mrk}(\mathbf{D})$ as $t\rightarrow \infty$.
\item [(d)] If $\Sigma^{\mathbf{D},\mathbf{d}}\cap \operatorname{Mrk}(\mathbf{D})$ contains isolated points, then each such a point $\overline{\mathbf{x}}\in \Sigma^{\mathbf{D},\mathbf{d}}\cap\operatorname{Mrk}(\mathbf{D})$ is an asymptotically stable equilibrium point of the perturbed system \eqref{sp}.
\item [(e)] If $\mathbf{d}\in \operatorname{Im}(\mathbf{D})$ is a regular value of $\mathbf{D}$, then $\Sigma^{\mathbf{D},\mathbf{d}}$ is a globally bp-attracting set of the perturbed system \eqref{sp}. More precisely, for every $\overline{\mathbf{x}} \in\operatorname{Mrk}(\mathbf{D})$, such that the set $\{\mathbf{x}(t;\overline{\mathbf{x}}): t\geq 0\}$ is bounded, $\mathbf{x}(t;\overline{\mathbf{x}})\rightarrow \Sigma^{\mathbf{D},\mathbf{d}}$ as $t\rightarrow \infty$.
\item [(f)] Assume there exists $K\subset \operatorname{Mrk}(\mathbf{D})$ a positively invariant compact set of the system \eqref{sp}. If $\mathbf{d}\in \operatorname{Im}(\mathbf{D})$ is a regular value of $\mathbf{D}$, then for every $\overline{\mathbf{x}}\in K$ we have that $\mathbf{x}(t;\overline{\mathbf{x}})\rightarrow \Sigma^{\mathbf{D},\mathbf{d}}$ as $t\rightarrow \infty$.
\item [(g)] If $\mathbf{d}\in \operatorname{Im}(\mathbf{D})$ is a regular value of $\mathbf{D}$, and if $\Sigma^{\mathbf{D},\mathbf{d}}$ contains isolated points, then each such a point $\overline{\mathbf{x}}\in \Sigma^{\mathbf{D},\mathbf{d}}$ is an asymptotically stable equilibrium point of the perturbed system \eqref{sp}.
\end{itemize}
\end{theorem}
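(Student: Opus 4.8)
The plan is to reduce the whole theorem to one elementary identity describing the action of the perturbation on the scalar quantities $D_1,\dots,D_p$ along the flow. First I would apply Theorem~\ref{MTD}, taking the Riemannian manifold to be $\operatorname{Mrk}(\mathbf{D})$ with the Euclidean metric; there the $\nabla D_1,\dots,\nabla D_p$ are pointwise linearly independent, so $\|\bigwedge_{i=1}^{p}\nabla D_i\|_p>0$ and $X_0^\lambda$ is a well-defined smooth vector field. The key observation is that, since $(-1)^{n-i+1}=-(-1)^{n-i}$, the field $X_0^\lambda$ is precisely the negative of the particular solution $X_0$ produced by Theorem~\ref{MTD} for the shifted data $g_i:=h_i+\lambda(D_i-d_i)$. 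Hence $\mathcal{L}_{X_0^\lambda}D_k=-[h_k+\lambda(D_k-d_k)]$ for every $k$, and combining this with the hypothesis $\mathcal{L}_X D_k=h_k$ from \eqref{EQAU} yields the central identity
\begin{equation}
\mathcal{L}_{X+X_0^\lambda}D_k=-\lambda(D_k-d_k),\qquad k\in\{1,\dots,p\}.
\end{equation}

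Everything else follows from this identity. Along any integral curve $\mathbf{x}(t;\overline{\mathbf{x}})$ of \eqref{sp} each coordinate satisfies the scalar linear equation $\frac{d}{dt}(D_k-d_k)=-\lambda(D_k-d_k)$, so $D_k(\mathbf{x}(t;\overline{\mathbf{x}}))-d_k=(D_k(\overline{\mathbf{x}})-d_k)e^{-\lambda t}$. Equivalently, setting $V:=\tfrac12\|\mathbf{D}-\mathbf{d}\|^2=\tfrac12\sum_{k=1}^{p}(D_k-d_k)^2$, the identity gives $\dot V=-2\lambda V$ along \eqref{sp}, so $V$ is a global Lyapunov function with $V(\mathbf{x}(t;\overline{\mathbf{x}}))=V(\overline{\mathbf{x}})e^{-2\lambda t}$, vanishing exactly on $\Sigma^{\mathbf{D},\mathbf{d}}$. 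Part (a) is then immediate: if $\overline{\mathbf{x}}\in\Sigma^{\mathbf{D},\mathbf{d}}$ then $V(\overline{\mathbf{x}})=0$, hence $V(\mathbf{x}(t))\equiv 0$ and the curve remains in $\Sigma^{\mathbf{D},\mathbf{d}}$; invariance for $X$ alone is the sufficiency half of the classical criterion \eqref{eqimp} recalled at the beginning of Section~2.

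For part (b) I would take $\overline{\mathbf{x}}\in\operatorname{Mrk}(\mathbf{D})$ with bounded positive orbit. As recalled in the preamble, $\omega(\overline{\mathbf{x}})$ is then a nonempty compact subset of $\operatorname{Mrk}(\mathbf{D})$ with $\mathbf{x}(t;\overline{\mathbf{x}})\to\omega(\overline{\mathbf{x}})$. Since $V(\mathbf{x}(t))\to 0$ and $V$ is continuous, $V\equiv 0$ on $\omega(\overline{\mathbf{x}})$, that is $\omega(\overline{\mathbf{x}})\subseteq\Sigma^{\mathbf{D},\mathbf{d}}\cap\operatorname{Mrk}(\mathbf{D})$, and the estimate $\operatorname{dist}(\mathbf{x}(t),\Sigma^{\mathbf{D},\mathbf{d}}\cap\operatorname{Mrk}(\mathbf{D}))\le\operatorname{dist}(\mathbf{x}(t),\omega(\overline{\mathbf{x}}))\to 0$ gives the conclusion. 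Part (c) is the special case in which boundedness is supplied by the positively invariant compact set $K$. Parts (e) and (f) then follow from (b) and (c) together with the observation, already recorded after the second theorem, that for a regular value $\mathbf{d}$ one has $\Sigma^{\mathbf{D},\mathbf{d}}\subseteq\operatorname{Mrk}(\mathbf{D})$, whence $\Sigma^{\mathbf{D},\mathbf{d}}\cap\operatorname{Mrk}(\mathbf{D})=\Sigma^{\mathbf{D},\mathbf{d}}$.

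Finally, for (d) (and its regular-value specialization (g)) I would first show that an isolated point $\overline{\mathbf{x}}_*$ of $\Sigma^{\mathbf{D},\mathbf{d}}\cap\operatorname{Mrk}(\mathbf{D})$ is an equilibrium: picking an open $W\subseteq\operatorname{Mrk}(\mathbf{D})$ with $W\cap\Sigma^{\mathbf{D},\mathbf{d}}=\{\overline{\mathbf{x}}_*\}$, invariance (part (a)) and continuity force $\mathbf{x}(t;\overline{\mathbf{x}}_*)=\overline{\mathbf{x}}_*$ for small $|t|$, so $(X+X_0^\lambda)(\overline{\mathbf{x}}_*)=0$. On such a $W$ the function $V$ is positive definite at $\overline{\mathbf{x}}_*$ (since $V(\mathbf{x})=0\Leftrightarrow\mathbf{x}\in\Sigma^{\mathbf{D},\mathbf{d}}$) and satisfies $\dot V=-2\lambda V<0$ away from $\overline{\mathbf{x}}_*$, so Lyapunov's asymptotic stability theorem applies. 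The whole argument is concentrated in the opening identity; I expect the only genuinely delicate point to be the bookkeeping that identifies $X_0^\lambda$ with $-X_0$ for the shifted data $g_i=h_i+\lambda(D_i-d_i)$ in Theorem~\ref{MTD}, after which the dynamical conclusions are routine Lyapunov and $\omega$-limit set arguments.
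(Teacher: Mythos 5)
Your proposal is correct and follows essentially the same route as the paper: both hinge on the identity $\mathcal{L}_{X+X_0^\lambda}D_k=-\lambda(D_k-d_k)$ (equivalently, exponential decay of the Lyapunov function $\sum_k(D_k-d_k)^2$, which the paper calls $F$ and you call $2V$), followed by the same $\omega$-limit set argument for (b), (c), the same strict-Lyapunov argument for (d), and the same reduction of (e)--(g) to the earlier items via $\Sigma^{\mathbf{D},\mathbf{d}}\subset\operatorname{Mrk}(\mathbf{D})$ for regular values. Your explicit verification that an isolated point of the invariant set is an equilibrium is a small point the paper leaves implicit, but otherwise the two proofs coincide.
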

\begin{proof}
We start by defining the smooth function $F:\operatorname{Mrk}(\mathbf{D})\rightarrow [0,\infty)$ given by
$$
F(\mathbf{x}):=(D_1 (\mathbf{x})-d_1)^2 +\dots + (D_p (\mathbf{x})-d_p)^2, ~ \forall \mathbf{x}\in\operatorname{Mrk}(\mathbf{D}).
$$
Since the vector field $X_{0}^{\lambda}\in\mathfrak{X}(\operatorname{Mrk}(\mathbf{D}))$ verifies by construction the equalities
\begin{equation}\label{rei}
\mathcal{L}_{X_{0}^{\lambda}}D_i = - [h_i+\lambda (D_i - d_i)], ~ \forall i\in\{1,\dots,p\},
\end{equation}
we get that
\begin{align*}
\begin{split}
\mathcal{L}_{X_{0}^{\lambda}}F &= \sum_{i=1}^{p}\mathcal{L}_{X_{0}^{\lambda}}(D_i - d_i)^2 = \sum_{i=1}^{p}2(D_i - d_i)\mathcal{L}_{X_{0}^{\lambda}}(D_i - d_i)=\sum_{i=1}^{p}2(D_i - d_i)\mathcal{L}_{X_{0}^{\lambda}}D_i\\
&= \sum_{i=1}^{p}2(D_i - d_i)[-h_i-\lambda (D_i - d_i)]= \sum_{i=1}^{p}(-2 h_i)(D_i - d_i) + (-2\lambda)\sum_{i=1}^{p}(D_i - d_i)^2 \\
&=\sum_{i=1}^{p}(-2 h_i)(D_i - d_i) + (-2\lambda) F.
\end{split}
\end{align*}
Consequently, as the vector field $X$ verifies by definition the equalities
\begin{equation*}
\mathcal{L}_{X}D_i = h_i, ~ \forall i\in\{1,\dots,p\},
\end{equation*}
using a computation similar to the one given above, we get that
$$
\mathcal{L}_{X}F =\sum_{i=1}^{p}2 h_i(D_i - d_i),
$$
and hence we obtain the following relations
\begin{align}\label{lder}
\begin{split}
\mathcal{L}_{X+X_{0}^{\lambda}}F &= \mathcal{L}_{X}F + \mathcal{L}_{X_{0}^{\lambda}}F\\
&=\sum_{i=1}^{p}2 h_i(D_i - d_i) + \left(\sum_{i=1}^{p}(-2 h_i)(D_i - d_i) + (-2\lambda) F \right)\\
&=(-2\lambda) F.
\end{split}
\end{align}
Let $\overline{\mathbf{x}}\in \operatorname{Mrk}(\mathbf{D})$ be given, and let $t\in I_{\overline{\mathbf{x}}}\subseteq\mathbb{R}\mapsto \mathbf{x}(t;\overline{\mathbf{x}})\in\operatorname{Mrk}(\mathbf{D})$ be the integral curve of the vector field $X+X_{0}^{\lambda}\in\mathfrak{X}(\operatorname{Mrk}(\mathbf{D}))$ such that $\mathbf{x}(0;\overline{\mathbf{x}})=\overline{\mathbf{x}}$, where $I_{\overline{\mathbf{x}}}\subseteq\mathbb{R}$ stands for the maximal domain of definition of the solution $\mathbf{x}(\cdot;\overline{\mathbf{x}})$.

Using the relation \eqref{lder}, we obtain that
\begin{equation*}
\dfrac{\mathrm{d}}{\mathrm{d}t}F(\mathbf{x}(t;\overline{\mathbf{x}}))= (-2\lambda) F(\mathbf{x}(t;\overline{\mathbf{x}})), ~ \forall t\in I_{\overline{\mathbf{x}}},
\end{equation*}
and hence 
\begin{equation}\label{grw}
F(\mathbf{x}(t;\overline{\mathbf{x}}))= \exp(-2\lambda t) \cdot F(\overline{\mathbf{x}}), ~ \forall t\in I_{\overline{\mathbf{x}}}.
\end{equation}
Moreover, since the set of zeros of $F$ coincides with $\Sigma^{\mathbf{D},\mathbf{d}}\cap \operatorname{Mrk}(\mathbf{D})$, the following sets equality holds true:
\begin{equation}\label{zeroset}
\{\mathbf{x}\in\operatorname{Mrk}(\mathbf{D}) : (\mathcal{L}_{X+ X_{0}^{\lambda}}F)(\mathbf{x})=0\}=\Sigma^{\mathbf{D},\mathbf{d}}\cap \operatorname{Mrk}(\mathbf{D}).
\end{equation}

Let us prove now the first item of the conclusion.
\begin{itemize}
\item [(a)] Using the relations \eqref{EQAU}, it follows directly that $\Sigma^{\mathbf{D},\mathbf{d}}$ is an invariant set of the dynamical system $\dot {\mathbf{x}} = X(\mathbf{x}), ~ \mathbf{x}\in U$. 

In order to prove that $\Sigma^{\mathbf{D},\mathbf{d}}$ is an invariant set of the perturbed system, $\dot {\mathbf{x}} = X(\mathbf{x})+ X_{0}^{\lambda}(\mathbf{x}), ~ \mathbf{x}\in \operatorname{Mrk}(\mathbf{D})$, recall from \eqref{rei} that
\begin{align*}
\mathcal{L}_{X_{0}^{\lambda}}D_i &= - [h_i+\lambda (D_i - d_i)], ~ \forall i\in\{1,\dots,p\},
\end{align*}
and thus, using the relations \eqref{EQAU} we obtain
\begin{align*}
\mathcal{L}_{X+X_{0}^{\lambda}}D_i &= \mathcal{L}_{X}D_i + \mathcal{L}_{X_{0}^{\lambda}}D_i= h_i - [h_i+\lambda (D_i - d_i)]\\
&=-\lambda (D_i - d_i), ~ \forall i\in\{1,\dots,p\}.
\end{align*}

\item [(b)] Let $\overline{\mathbf{x}}\in\operatorname{Mrk}(\mathbf{D})$ be an arbitrary point of the open set $\operatorname{Mrk}(\mathbf{D})$ such that the set $\{\mathbf{x}(t;\overline{\mathbf{x}}): t\geq 0\}$ is bounded. We show that the $\omega-$limit set $\omega(\overline{\mathbf{x}})$ is a subset of  $\Sigma^{\mathbf{D},\mathbf{d}}\cap \operatorname{Mrk}(\mathbf{D})$. In order to do that, let $\mathbf{y}\in\omega(\overline{\mathbf{x}})$ be arbitrary chosen. As the set $\{\mathbf{x}(t;\overline{\mathbf{x}}): t\geq 0\}$ is bounded, we get that $[0,\infty)\subset I_{\overline{\mathbf{x}}}$, and hence there exists an increasing sequence $(t_{n})_{n\in\mathbb{N}}\subset [0,\infty)$, with $\lim_{n\rightarrow \infty}t_n =\infty$, such that $\lim_{n\rightarrow \infty}\mathbf{x}(t_n;\overline{\mathbf{x}})=\mathbf{y}$. Using the relation \eqref{grw}, it follows that
\begin{equation}\label{grwok}
F(\mathbf{x}(t;\overline{\mathbf{x}}))= \exp(-2\lambda t) \cdot F(\overline{\mathbf{x}}), ~ \forall t\in [0,\infty).
\end{equation}
For $t=t_n \geq 0$, $n\in\mathbb{N}$, the equality \eqref{grwok} becomes
\begin{equation*}
F(\mathbf{x}(t_n;\overline{\mathbf{x}}))= \exp(-2\lambda t_n) \cdot F(\overline{\mathbf{x}}), ~ \forall n\in \mathbb{N}.
\end{equation*}
Since $\lambda >0$, $\lim_{n\rightarrow \infty} t_n =\infty $, $\lim_{n\rightarrow \infty}\mathbf{x}(t_n;\overline{\mathbf{x}})=\mathbf{y}$, and $F$ is continuous, we get that $F(\mathbf{y})=0$. Taking into account that $F^{-1}(\{0\})=\Sigma^{\mathbf{D},\mathbf{d}}\cap \operatorname{Mrk}(\mathbf{D})$, it follows that $\mathbf{y}\in \Sigma^{\mathbf{D},\mathbf{d}}\cap \operatorname{Mrk}(\mathbf{D})$. As $\mathbf{y}\in\omega(\overline{\mathbf{x}})$ was arbitrary chosen, we obtain that $\omega(\overline{\mathbf{x}})\subseteq \Sigma^{\mathbf{D},\mathbf{d}}\cap \operatorname{Mrk}(\mathbf{D})$. Since $\mathbf{x}(t;\overline{\mathbf{x}})\rightarrow \omega(\overline{\mathbf{x}})\subseteq \Sigma^{\mathbf{D},\mathbf{d}}\cap \operatorname{Mrk}(\mathbf{D})$ as $t\rightarrow \infty$, it follows that $\mathbf{x}(t;\overline{\mathbf{x}})\rightarrow \Sigma^{\mathbf{D},\mathbf{d}}\cap \operatorname{Mrk}(\mathbf{D})$ as $t\rightarrow \infty$.
\item [(c)] The proof follows directly from item $(b)$ taking into account that for every $\overline{\mathbf{x}} \in K$, the set $\{\mathbf{x}(t;\overline{\mathbf{x}}): t\geq 0\}$ is bounded.
\item [(d)] Let $\mathbf{x}_e$ be an isolated point of $\Sigma^{\mathbf{D},\mathbf{d}}\cap\operatorname{Mrk}(\mathbf{D})$. Hence, there exists $U_{\mathbf{x}_e} \subseteq \operatorname{Mrk}(\mathbf{D})$, an open neighborhood of $\mathbf{x}_e$ such that $U_{\mathbf{x}_e}\cap(\Sigma^{\mathbf{D},\mathbf{d}}\cap\operatorname{Mrk}(\mathbf{D}))=\{\mathbf{x}_e \}$. 

Let $F: U_{\mathbf{x}_e}\rightarrow [0,\infty)$ be given by
$$
F(\mathbf{x}):=(D_1 (\mathbf{x})-d_1)^2 +\dots + (D_p (\mathbf{x})-d_p)^2, ~ \forall \mathbf{x}\in U_{\mathbf{x}_e}.
$$
Note that $\mathbf{x}_e$ is the unique solution of the equation $F(\mathbf{x})=0$ in $U_{\mathbf{x}_e}$. This follows directly taking into account that the set of zeros of $F$ located in $U_{\mathbf{x}_e}$ is  $U_{\mathbf{x}_e}\cap(\Sigma^{\mathbf{D},\mathbf{d}}\cap \operatorname{Mrk}(\mathbf{D}))$, and $U_{\mathbf{x}_e}\cap(\Sigma^{\mathbf{D},\mathbf{d}}\cap \operatorname{Mrk}(\mathbf{D}))=\{\mathbf{x}_e\}$.

Let us recall now from \eqref{lder} that the smooth function $F$ satisfies the relation
\begin{equation}
(\mathcal{L}_{X+X_{0}^{\lambda}}F) (\mathbf{x})= (-2\lambda)F(\mathbf{x}), ~ \forall \mathbf{x}\in U_{\mathbf{x}_e}.
\end{equation}
Hence, we get that $F(\mathbf{x}_e)=0$, $F(\mathbf{x})>0$, $(\mathcal{L}_{X+X_{0}^{\lambda}}F) (\mathbf{x})<0$, for every $\mathbf{x}\in U_{\mathbf{x}_e}\setminus\{\mathbf{x}_e\}$. Thus $F$ is a strict Lyapunov function associated to $\mathbf{x}_e$ and  consequently $\mathbf{x}_e$ is an asymptotically stable equilibrium point of the dynamical system \eqref{sp}.
\item [(e,f,g)]Each of the items $(e),(f),(g)$ follows directly from the corresponding one taking into account that if $\mathbf{d}\in \operatorname{Im}(\mathbf{D})$ is a regular value of $\mathbf{D}$ then $\Sigma^{\mathbf{D},\mathbf{d}}\subset \operatorname{Mrk}(\mathbf{D})$, and hence, in this case, $\Sigma^{\mathbf{D},\mathbf{d}}\cap\operatorname{Mrk}(\mathbf{D})=\Sigma^{\mathbf{D},\mathbf{d}}$.
\end{itemize}
\end{proof}

Next, we present an example which illustrates the main results obtained in Theorem \ref{mainTHM}.
\begin{example}
Let us consider the following dynamical system on $\mathbb{R}^2$
\begin{equation}\label{exem}
\left\{ \begin{array}{l}
 \dot x = x(x^2+y^2-1) \\
 \dot y = x^2+y^2-1,
 \end{array} \right.
\end{equation}
generated by the vector field $X:=x(x^2+y^2-1)\dfrac{\partial}{\partial x} + (x^2+y^2-1)\dfrac{\partial}{\partial y}\in\mathfrak{X}(\mathbb{R}^2)$.

The sets, $S_1:=\{(x,y)\in\mathbb{R}^2 : x=0\}$ and $S_2:=\{(x,y)\in\mathbb{R}^2 : x^2 +y^2 =1\}$, are both dynamically invariant sets of the system \eqref{exem}. This follows directly from \eqref{eqimp}, taking into account that
\begin{equation*}
\mathcal{L}_{X}(x)= x(x^2+y^2-1), ~ \mathcal{L}_{X}(x^2 +y^2)= 2(x^2+y)(x^2+y^2-1), ~\forall (x,y)\in\mathbb{R}^2.
\end{equation*}

Now we apply the Theorem \ref{mainTHM} in order to globally asymptotically bp-stabilize each of the invariant sets $S_1$, $S_2$, and $S_1\cap S_2$. 

\begin{enumerate}
\item [(i)] In order to globally asymptotically bp-stabilize $S_1$, following the notations from Theorem \ref{mainTHM}, we have that $S_1 = \Sigma^{\mathbf{D},\mathbf{d}}=\mathbf{D}^{-1}(\{\mathbf{d}\})$, where $\mathbf{D}=D_1 :\mathbb{R}^{2}\rightarrow \mathbb{R}$, $\mathbf{D}(x,y)=D_1(x,y)=x$, $\forall(x,y)\in\mathbb{R}^2$, and $\mathbf{d}=d_1 =0$. Thus, $\operatorname{Mrk}(\mathbf{D})=\mathbb{R}^2$, and hence $\Sigma^{\mathbf{D},\mathbf{d}}\cap \operatorname{Mrk}(\mathbf{D})=S_1$.

As $h_1=\mathcal{L}_{X} D_1=x(x^2+y^2-1)$, and $\Theta_1 =\star[\star(\nabla D_1)]=-\dfrac{\partial}{\partial x}$, it follows that the control vector field, $X_{0}^{\lambda}\in\mathfrak{X}(\operatorname{Mrk}(\mathbf{D}))=\mathfrak{X}(\mathbb{R}^2)$, $\lambda >0$, is given by
\begin{align*}
X_{0}^{\lambda}&=\|\nabla D_1\|^{-2}(-1)^{2-1+1}[h_1+\lambda (D_1-d_1)]\Theta_1=\left[-x(x^2 + y^2-1)-\lambda x\right]\dfrac{\partial}{\partial x}.
\end{align*}
Consequently, from Theorem \ref{mainTHM} we obtain that the perturbed system 
\begin{equation*}
\left\{ \begin{array}{l}
\dot x = -\lambda x \\
\dot y = x^2+y^2-1,
\end{array} \right.
\end{equation*}
generated by the vector field $X+X_{0}^{\lambda}=-\lambda x\dfrac{\partial}{\partial x}+(x^2+y^2-1)\dfrac{\partial}{\partial y}\in\mathfrak{X}(\mathbb{R}^2)$, globally asymptotically bp-stabilize the invariant set $S_1$.

\item [(ii)] In order to globally asymptotically bp-stabilize $S_2$, following the notations from Theorem \ref{mainTHM}, we have that $S_2 = \Sigma^{\mathbf{D},\mathbf{d}}=\mathbf{D}^{-1}(\{\mathbf{d}\})$, where $\mathbf{D}=D_1 :\mathbb{R}^{2}\rightarrow \mathbb{R}$, $\mathbf{D}(x,y)=D_1(x,y)=x^2+y^2$, $\forall(x,y)\in\mathbb{R}^2$, and $\mathbf{d}=d_1 =1$. Thus, $\operatorname{Mrk}(\mathbf{D})=\mathbb{R}^2 \setminus\{(0,0)\}$, and hence $\Sigma^{\mathbf{D},\mathbf{d}}\cap \operatorname{Mrk}(\mathbf{D})=S_2$.

As $h_1=\mathcal{L}_{X} D_1=2(x^2+y)(x^2+y^2-1)$, and $\Theta_1 =\star[\star(\nabla D_1)]=-2x\dfrac{\partial}{\partial x}-2y\dfrac{\partial}{\partial x}$, it follows that the control vector field, $X_{0}^{\lambda}\in\mathfrak{X}(\operatorname{Mrk}(\mathbf{D}))=\mathfrak{X}(\mathbb{R}^2 \setminus\{(0,0)\})$, $\lambda >0$, is given by
\begin{align*}
X_{0}^{\lambda}&=\|\nabla D_1\|^{-2}(-1)^{2-1+1}[h_1+\lambda (D_1-d_1)]\Theta_1\\
&=-\dfrac{x(x^2+y^2-1)(2x^2+2y+\lambda)}{2(x^2+y^2)}\dfrac{\partial}{\partial x}-\dfrac{y(x^2+y^2-1)(2x^2+2y+\lambda)}{2(x^2+y^2)}\dfrac{\partial}{\partial y}.
\end{align*}
Consequently, from Theorem \ref{mainTHM} we obtain that the perturbed system 
\begin{equation*}
\left\{ \begin{array}{l}
\dot x = \dfrac{x(x^2+y^2-1)(2y^2-2y-\lambda)}{2(x^2+y^2)}\\
\dot y = \dfrac{(x^2+y^2-1)(2x^2-2x^2 y-\lambda y)}{2(x^2+y^2)},
\end{array} \right.
\end{equation*}
generated by the vector field $X+X_{0}^{\lambda}\in\mathfrak{X}(\mathbb{R}^2 \setminus\{(0,0)\})$
$$X+X_{0}^{\lambda}=\dfrac{x(x^2+y^2-1)(2y^2-2y-\lambda)}{2(x^2+y^2)}\dfrac{\partial}{\partial x}+\dfrac{(x^2+y^2-1)(2x^2-2x^2 y-\lambda y)}{2(x^2+y^2)}\dfrac{\partial}{\partial y},$$ globally asymptotically bp-stabilize the invariant set $S_2$.

\item [(iii)] In order to globally asymptotically bp-stabilize $S_1\cap S_2$, following the notations from Theorem \ref{mainTHM}, we have that $S_1\cap S_2 = \Sigma^{\mathbf{D},\mathbf{d}}=\mathbf{D}^{-1}(\{\mathbf{d}\})$, where $\mathbf{D}=(D_1,D_2) :\mathbb{R}^{2}\rightarrow \mathbb{R}^2$, $\mathbf{D}(x,y)=(D_1(x,y),D_2(x,y))=(x,x^2+y^2)$, $\forall(x,y)\in\mathbb{R}^2$, and $\mathbf{d}=(d_1,d_2)=(0,1)$. Thus, $\operatorname{Mrk}(\mathbf{D})=\mathbb{R}^2 \setminus\{(x,y):y=0\}=\mathbb{R}^2 \setminus Ox$, and hence $\Sigma^{\mathbf{D},\mathbf{d}}\cap \operatorname{Mrk}(\mathbf{D})=S_1\cap S_2=\{(0,-1),(0,1)\}$.

As $h_1=\mathcal{L}_{X} D_1=x(x^2+y^2-1)$, $h_2=\mathcal{L}_{X} D_2=2(x^2+y)(x^2+y^2-1)$, $\Theta_1 =\star[\nabla D_2\wedge\star(\nabla D_1\wedge\nabla D_2)]=-4y^2\dfrac{\partial}{\partial x}+4xy\dfrac{\partial}{\partial y}$, and $\Theta_2 =\star[\nabla D_1\wedge\star(\nabla D_1 \wedge\nabla D_2)]=2y\dfrac{\partial}{\partial y}$, it follows that the control vector field, $X_{0}^{\lambda}\in\mathfrak{X}(\operatorname{Mrk}(\mathbf{D}))=\mathfrak{X}(\mathbb{R}^2 \setminus Ox)$, $\lambda >0$, is given by
\begin{align*}
X_{0}^{\lambda}&=\|\nabla D_1 \wedge \nabla D_2\|^{-2}\left\{[h_1+\lambda (D_1-d_1)]\Theta_1 - [h_2+\lambda (D_2-d_2)]\Theta_2\right\}\\
&=-x(x^2+y^2-1+\lambda)\dfrac{\partial}{\partial x}+\left[\dfrac{\lambda (x^2-y^2+1)}{2y}-x^2-y^2+1\right]\dfrac{\partial}{\partial y}.
\end{align*}
Consequently, from Theorem \ref{mainTHM} we obtain that the perturbed system 
\begin{equation}\label{petdi}
\left\{ \begin{array}{l}
\dot x = -\lambda x\\
\dot y = \dfrac{\lambda (x^2-y^2+1)}{2y},
\end{array} \right.
\end{equation}
generated by the vector field $X+X_{0}^{\lambda}\in\mathfrak{X}(\mathbb{R}^2 \setminus Ox)$
$$X+X_{0}^{\lambda}=-\lambda x \dfrac{\partial}{\partial x}+\dfrac{\lambda (x^2-y^2+1)}{2y}\dfrac{\partial}{\partial y},$$ globally asymptotically bp-stabilize the invariant set $S_1\cap S_2 =\{(0,-1),(0,1)\}$. Moreover, as $(0,-1)$ and $(0,1)$ are isolated points of the invariant set $S_1\cap S_2$, it follows that each of them is an asymptotically stable equilibrium state of the perturbed dynamics \eqref{petdi}.

\end{enumerate} 
\end{example}

%\subsection*{Acknowledgment}
%This work was supported by a grant of the Romanian National Authority for Scientific Research, CNCS-UEFISCDI, project number PN-II-RU-TE-2011-3-0103. 

\bigskip
\bigskip

\noindent {\sc R.M. Tudoran}\\
West University of Timi\c soara\\
Faculty of Mathematics and Computer Science\\
Department of Mathematics\\
Blvd. Vasile P\^arvan, No. 4\\
300223 - Timi\c soara, Rom\^ania.\\
E-mail: {\sf razvan.tudoran@e-uvt.ro}\\
\medskip

\end{document}